\documentclass{amsart}

\usepackage{amsfonts,amssymb,amsmath,enumerate,verbatim,mathtools,tikz,bm,mathrsfs,hyperref,comment,stmaryrd,amsthm}
\usepackage{xcolor}
\usetikzlibrary{decorations.pathreplacing}

\usepackage{colonequals}
\usepackage{adjustbox}
\usepackage{leftindex}
\usepackage[all,pdf]{xy}
\usepackage{tikz-cd}
\usepackage{enumitem}
\hypersetup{
    colorlinks=true,
    linkcolor=blue,
    filecolor=magenta,      
    urlcolor=cyan,
}

\usepackage{thmtools}
\usepackage{cleveref}

\DeclareMathOperator{\op}{\mathsf{op}}
\DeclareMathOperator{\Ima}{Im}

\DeclareMathOperator{\Coker}{Coker}

\DeclareMathOperator{\Tr}{Tr}
\DeclareMathOperator{\add}{add}

\DeclareMathOperator{\Hom}{Hom}
\DeclareMathOperator{\Ext}{Ext}

\newcommand{\del}{\partial}
\newcommand{\Gproj}{\mathrm{GP}}
\newcommand{\mo}{\mbox{-}\mathrm{mod}}
\newcommand{\proj}{\mathcal{P}}

\newcommand{\perpone}[1]{{}^{\perp_1}#1}

\newtheorem{theorem}{Theorem}[section]

\newtheorem{proposition}[theorem]{Proposition}

\newtheorem{lemma}[theorem]{Lemma}
\newtheorem{corollary}[theorem]{Corollary}

\theoremstyle{definition}

\newtheorem{chunk}[theorem]{}

\usepackage[utf8]{inputenc}
\begin{document}

\title[A note on the $\mho$-quiver]{A note on the $\mho$-quiver }
\author[Xue-Song Lu] {Xue-Song Lu}
\address{School of Mathematical Sciences, Shanghai Jiao Tong University, Shanghai 200240, P.R. China, https://orcid.org/0009-0001-4439-4684}
\email{leocedar@sjtu.edu.cn}
\thanks{\it 2020 Mathematics Subject Classification. Primary 16G10; Secondary 16E05, 16D90}
\thanks{Supported by the National Key Research and Development Project 2025YFA1017202, and National Natural Science Foundation of China with Grant No. 12131015.}

\begin{abstract}
A two-parameter hierarchy of classes of finitely generated modules over an artin algebra is described by using the $\mho$-quiver. As an application, it is shown that the stable category of reflexive modules is equivalent to two other categories via the mutually quasi-inverse equivalences induced by $\Omega$ and $\mho$. 

\vskip5pt

{\it Keywords and phrases.}  $\mho$-quiver, artin algebra, reflexive module, $t$-torsionfree module, Gorenstein-projective module

\end{abstract}

\maketitle

\section{Introduction}
The $\mho$-quiver of an artin algebra was introduced by C. M. Ringel and P. Zhang in
\cite{RZ}. It serves as a useful tool to study the homological properties of modules. In particular, Gorenstein-projective, semi-Gorenstein-projective and $t$-torsionfree modules admit natural interpretations in the $\mho$-quiver. 

\vskip5pt

The aim of this note is to give a two-parameter hierarchy of classes of finitely generated modules over an artin algebra through syzygies of the injective cogenerator and cosyzygies of the projective generator, by using the $\mho$-quiver. For details, see \Cref{quiverthm}. 

\vskip5pt

Reflexive modules play an important role in several areas. For example, in representation theory of quotient singularities, the Auslander--Reiten quiver of reflexive modules realizes the McKay quiver of the corresponding finite group (\cite{A}); in noncommutative birational geometry, noncommutative crepant resolutions are constructed as endomorphism algebras of suitable reflexive modules (\cite{VdB}); in algebraic geometry, reflexive sheaves provide natural substitutes for vector bundles on singular varieties (\cite{Har}). In this note, as an application of the main result, we show that the stable category of finitely generated reflexive modules of an artin algebra is equivalent to two other categories via the mutually quasi-inverse equivalences induced by $\Omega$ and $\mho$. For details, see \Cref{reflexiveequivalence}. 

\vskip5pt

The notion of a $t$-torsionfree module was introduced by M. Auslander and
M. Bridger in \cite{AB} as a higher analogue of torsionless and reflexive modules. Indeed, $1$-torsionfree modules are precisely the torsionless modules, while $2$-torsionfree modules are precisely the reflexive modules. The notion has many applications both in representation theory of artin algebras and commutative noetherian rings; for example, see \cite{HH}, \cite{MTT}, \cite{DT}, \cite{O} and \cite{CM}. For an artin algebra $\Lambda$ and $t\geq 1$, a $\Lambda$-module $M$ is $t$-torsionfree (respectively, $\infty$-torsionfree) if $\Ext_{\Lambda^{\op}}^i(\Tr M, \Lambda)=0$ for $1\leq i\leq t$ (respectively, for $i\geq 1$), where $\Tr$ is the Auslander transpose. In this note, as a byproduct, we show that $t$-torsionfree modules also have strong relations with $\Ext_\Lambda^1(-, \mathcal S_t)$ for some class of $\Lambda$-modules $\mathcal S_t$. For details, see \Cref{torsionfree} and \Cref{torsionfreeinverse}. 

\section{The \texorpdfstring{$\mho$}{agemo}-quiver of an artin algebra}
Throughout this note, $\Lambda$ is always an artin algebra over a commutative artin ring $R$. A module always means a finitely generated left $\Lambda$-module, whose category is denoted by $\Lambda\mo$. Denote by $D$ the canonical duality functor between $\Lambda\mo$ and $\Lambda^{\op}\mo$. Denote by $\proj$ the full subcategory of projective modules. 

\vskip5pt

Let $M\in \Lambda\mo$. Then $M$ has a projective cover $P\longrightarrow M$, whose kernel is denoted by $\Omega M$, called the first syzygy of $M$. $M$ also has a minimal left $\proj$-approximation $f: M\longrightarrow Q$. That is, $Q\in \proj$ and $f$ satisfies that any morphism $g: M\longrightarrow Q'$ with $Q'\in \proj$ factors through $f$ and if $f=tf$ for $t: Q\longrightarrow Q$, then $t$ must be an automorphism. The existence of such an $f$ is well-known; see \cite[Proposition 4.2]{AS} and \cite[Theorem 2.2]{ARS}.  

\vskip5pt

Moreover, set $\mho M\colonequals\Coker f$. Following \cite{RZ}, $\mho$ is pronounced ``agemo'', reminding that it is a kind of inverse of $\Omega$, as shown in the following proposition. Recall that a module is called torsionless if it is a submodule of some projective module. A module is torsionless if and only if its minimal left $\proj$-approximation is monic. 

\begin{proposition}\label{agemo}\cite[Lemma 3.2]{RZ}
Let $M\in \Lambda\mo$. If $M$ is indecomposable, non-projective and torsionless, then there is a short exact sequence
\[
\xymatrix{
0\ar[r]& M\ar[r]^-f& Q \ar[r] & \mho M\ar[r]& 0
}
\]
where $f$ is a minimal left $\proj$-approximation and $\mho M$ is indecomposable non-projective. In particular, $\Omega\mho M\cong M$. 
\end{proposition}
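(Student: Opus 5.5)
Since $M$ is torsionless, its minimal left $\proj$-approximation $f\colon M\to Q$ is monic (as recalled just before the statement), so the sequence $0\to M\xrightarrow{f} Q\to \mho M\to 0$ is exact by the definition of $\mho M=\Coker f$. What remains is to show that $\mho M$ is indecomposable and non-projective, and that $Q\to\mho M$ is a projective cover; then $\Omega\mho M\cong M$ follows at once.

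\emph{Projective cover.} Write $p\colon Q\to\mho M$ for the cokernel map. Suppose $p$ is not a projective cover. Then $Q=Q_1\oplus Q_2$ with $Q_2\neq 0$, $Q_2\subseteq \ker p=\Ima f$, and $p|_{Q_1}$ a projective cover. Hence $\Ima f=f(M)=(f(M)\cap Q_1)\oplus Q_2$, so $M$ has the nonzero projective summand $f^{-1}(Q_2)\cong Q_2$. Since $M$ is indecomposable, $M\cong Q_2$ would be projective, contradicting the hypothesis. An alternative check: if $\mathrm{rad}$-minimality fails, the composition $M\to Q\to Q_1\hookrightarrow Q$ is an endomorphism $t$ of $Q$ with $tf=f$ modulo a component through $Q_2$; the direct-summand argument above is cleaner, and I would use it. Thus $p$ is a projective cover and $\Omega\mho M\cong\ker p\cong M$.

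\emph{Non-projective.} If $\mho M$ were projective, the sequence would split and $M$ would be a direct summand of $Q$, hence projective, a contradiction. (Moreover $\mho M\ne 0$, since otherwise $M\cong Q$.)

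\emph{Indecomposable.} This is the main step. Suppose $\mho M=X_1\oplus X_2$ with both $X_i\neq0$. Neither $X_i$ can be projective: if $X_2$ were projective, the projective cover of $\mho M$ would be $P(X_1)\oplus X_2$, giving $M\cong\Omega\mho M\cong\Omega X_1$, so $Q\cong P(X_1)\oplus X_2$ with $f$ landing in $P(X_1)$. Then $f$ factors through the proper summand $P(X_1)$, and the idempotent $t$ projecting onto $P(X_1)$ satisfies $tf=f$ but is not an automorphism, contradicting the minimality of $f$. If both $X_i$ are non-projective, then $\Omega X_1\neq0\neq\Omega X_2$, and $M\cong\Omega\mho M\cong\Omega X_1\oplus\Omega X_2$, since the direct sum of projective covers is a projective cover. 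This contradicts the indecomposability of $M$.

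The delicate points are the two uses of minimality: excluding a projective summand of $\mho M$ via left-minimality of $f$, and excluding a superfluous summand of $Q$ via indecomposability of $M$. Both reduce to the direct-summand arguments given above.
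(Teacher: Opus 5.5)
Your proof is correct and complete. The paper gives no proof of this statement; it quotes it from Ringel--Zhang, Lemma 3.2, so there is nothing in the text to compare against. Your argument is a standard self-contained one, in three steps:
\begin{enumerate}
\item the indecomposability and non-projectivity of $M$ force $Q\to\mho M$ to be a projective cover, so $\Omega\mho M\cong M$;
\item left-minimality of $f$ excludes projective summands of $\mho M$;
\item additivity of $\Omega$ then gives indecomposability of $\mho M$.
\end{enumerate}

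One thing to fix: the ``alternative check'' sentence in your projective-cover step is garbled and should be deleted. The composite $M\to Q\to Q_1\hookrightarrow Q$ is a map out of $M$, not an endomorphism of $Q$. Minimality of $f$ is not what drives that step anyway. Take $M=\Lambda\oplus N$ with $N$ torsionless non-projective, and let $f_N\colon N\to Q_N$ be its minimal approximation. Then the minimal approximation $1_\Lambda\oplus f_N$ has a cokernel map that vanishes on the summand $\Lambda$ of $\Lambda\oplus Q_N$, so it is not a projective cover. What does the work is the hypothesis on $M$, exactly as your summand argument uses it.

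A minor simplification elsewhere: when $X_2$ is a nonzero projective summand of $\mho M$, the composite $Q\to\mho M\to X_2$ is a split epimorphism. Its kernel is therefore a proper direct summand of $Q$ containing $\Ima f$. This gives the idempotent $t$ with $tf=f$ directly, without having to identify $Q$ with $P(X_1)\oplus X_2$ compatibly over $\mho M$.
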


The $\mho$-quiver of $\Lambda$ has as vertices the isomorphism classes $[M]$ of indecomposable non-projective modules, and there is an arrow 
\[
\xymatrix{
[M]&&[N]\ar@{-->}[ll]
}
\]
whenever $M$ is torsionless and $N=\mho M$. 

\vskip5pt

A basic property of an $\mho$-quiver is that, for any vertex $[M]$, there is at most one arrow starting at $[M]$ and at most one arrow ending at $[M]$; see \cite[Page 4]{RZ}. Thus, a component in the $\mho$-quiver, which is called an $\mho$-component, must be one of the following five types: a linearly oriented quiver $\mathbb{A}_n$, an oriented cycle $\tilde{\mathbb{A}}_n$, $-\mathbb{N}$, $\mathbb{N}$ or $\mathbb{Z}$. 
\[
\begin{tikzcd}[column sep=1.6em, row sep=0.6em]
\cdots
& \circ \arrow[l, dashed]
& \circ \arrow[l, dashed]
& \circ \arrow[l, dashed]
&&
\circ
& \circ \arrow[l, dashed]
& \circ \arrow[l, dashed]
& \cdots \arrow[l, dashed]
\\
& -\mathbb{N}
&&&&
&& \mathbb{N}
\end{tikzcd}
\]

\vskip5pt 

Many homological properties of an indecomposable non-projective module can be read from its position in the component of the $\mho$-quiver. A module $M$ is called semi-Gorenstein-projective if $\Ext_\Lambda^i(M, \Lambda)=0$ for all $i>0$; $M$ is called $t$-torsionfree (respectively, $\infty$-torsionfree) if $\Ext_{\Lambda^{\op}}^i(\Tr M, \Lambda)=0$ for $1\leq i\leq t$ (respectively, for $i\geq 1$); $M$ is called Gorenstein-projective if it is both semi-Gorenstein-projective and $\infty$-torsionfree. A module is $1$-torsionfree if and only if it is torsionless, and a $2$-torsionfree module is called reflexive. 

\begin{theorem}\label{RZ} \cite[Theorem 1.5, Remark 3]{RZ}  Let $\Lambda$ be an artin algebra, and $M$ be an indecomposable non-projective $\Lambda$-module.
    \vskip5pt
\begin{enumerate}
\item[{\rm(0)}] \ $[M]$ is an isolated vertex if and only if $\Ext_\Lambda^1(M,\Lambda) \neq 0$ and $M$ is not torsionless.
\vskip5pt
\item[{\rm(1)}] \ There is a path of length $t\geq 1$ starting at $[M]$ if and only if $\Ext_\Lambda^i(M,\Lambda) = 0$ for $1\le i \le t$. In particular, $[M]$ is the start of an arrow if and only if $\Ext_\Lambda^1(M,\Lambda) = 0$.
\vskip5pt
\item[{\rm(1$'$)}] \ There is a path of length $\infty$ starting at $[M]$ if and only if $M$ is semi-Gorenstein-projective.
\vskip5pt
\item[{\rm(1$''$)}] \ The $\mho$-component of $[M]$ is $-\mathbb N$ if and only if $M$ is semi-Gorenstein-projective, but not Gorenstein-projective. 
\vskip5pt
\item[{\rm(2)}] \ There is a path of length $t\geq 1$ ending at $[M]$ if and only if $M$ is $t$-torsionfree. In particular,  $[M]$ is the end of an arrow if and only if $M$ is torsionless.
\vskip5pt
\item[{\rm(2$'$)}] \ There is a path of length $\infty$ ending at $[M]$ if and only if $M$ is $\infty$-torsionfree. 
\vskip5pt
\item[{\rm(2$''$)}] \ The $\mho$-component of $[M]$ is $\mathbb N$ if and only if $M$ is $\infty$-torsionfree, but not Gorenstein-projective.
\vskip5pt
\item[{\rm(3)}] \  There is both a path of length $\infty$ starting at $[M]$ and a path of length $\infty$ ending at $[M]$ if and only if $M$ is Gorenstein-projective.
\end{enumerate}
\end{theorem}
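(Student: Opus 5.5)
The plan is to translate everything into two basic facts about a single arrow, and then iterate.

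\emph{Fact A:} $[M]$ is the start of an arrow iff $\Ext^1_\Lambda(M,\Lambda)=0$. An arrow starts at $[M]$ exactly when $M=\mho N$ for some indecomposable, non-projective, torsionless $N$. By \Cref{agemo}, such an $N$ gives a sequence $0\to N\to Q\to M\to 0$ whose first map is a left $\proj$-approximation. Applying $\Hom(-,\Lambda)$ and using the approximation property shows $\Ext^1(M,\Lambda)=0$. Conversely, if $\Ext^1(M,\Lambda)=0$, take the projective cover $0\to\Omega M\to P\to M\to 0$. The vanishing makes $\Omega M\to P$ a left $\proj$-approximation. Since $P\to M$ is a projective cover of a non-projective indecomposable module, it has no split summand, so this approximation is minimal. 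Hence $\mho\Omega M=M$, with $\Omega M$ indecomposable, non-projective and torsionless.

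\emph{Fact B:} $[M]$ is the end of an arrow iff $M$ is torsionless. This is immediate from the definition and \Cref{agemo}.

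(0) follows at once from Facts A and B.

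For (1), I induct on $t$. A path of length $t$ starting at $[M]$ is $[M]\leftarrow[\Omega M]\leftarrow\cdots\leftarrow[\Omega^tM]$, since the predecessor is forced to be $\Omega M$. So the condition is that $\Ext^1(\Omega^{i}M,\Lambda)=0$ for $0\le i<t$, together with each $\Omega^iM$ being indecomposable and non-projective. Using $\Ext^1(\Omega^iM,\Lambda)\cong\Ext^{i+1}(M,\Lambda)$, this is equivalent to the stated vanishing. Indecomposability and non-projectivity are preserved along the way by \Cref{agemo}.

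(1$'$) is the limit of (1), and (2$'$) is the limit of (2).

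For (2), induct again. A path of length $t$ ending at $[M]$ means $M,\mho M,\dots,\mho^{t-1}M$ are all torsionless. I claim this is equivalent to $M$ being $t$-torsionfree. The key tool is the standard exact sequence
\[0\to\Ext^1(\Tr M,\Lambda)\to M\to M^{**}\to\Ext^2(\Tr M,\Lambda)\to 0.\]
I also need a dimension shift: for torsionless $M$ with the approximation sequence $0\to M\to Q\to\mho M\to 0$, dualizing gives $0\to(\mho M)^*\to Q^*\to M^*\to0$, which is exact because $f$ is an approximation. From this, $\Tr\mho M$ is, up to projective summands, $\Omega\Tr M$. Then $\Ext^{i}(\Tr\mho M,\Lambda)\cong\Ext^{i+1}(\Tr M,\Lambda)$ for $i\ge1$.

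With this shift, the induction runs as follows. $M$ is $t$-torsionfree iff $M$ is torsionless and $\mho M$ is $(t-1)$-torsionfree. By induction, the latter holds iff there is a path of length $t-1$ ending at $[\mho M]$. I expect this Tr-shift lemma to be the main obstacle, specifically checking carefully that $\Tr\mho M\simeq\Omega\Tr M$ holds stably.

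The remaining items concern whole components.

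For (3), combine (1$'$) and (2$'$) with the definition of Gorenstein-projective.

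For (1$''$), the component of $[M]$ is $-\mathbb N$ iff it has an infinite path starting at every vertex but contains a vertex at which no arrow ends. If $M$ is semi-Gorenstein-projective, then so are all $\Omega^iM$ and all $\mho^jM$. By (1$'$), every vertex of the component admits an infinite starting path. So the component is $-\mathbb N$ or $\mathbb Z$. It is $\mathbb Z$ iff $M$ is also $\infty$-torsionfree, i.e. iff $M$ is Gorenstein-projective. Cycle components are excluded here, since in a cycle every module is Gorenstein-projective by (3), and the same argument shows they cannot be $-\mathbb N$.

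(2$''$) is handled dually.
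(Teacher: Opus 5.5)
The paper gives no proof of this theorem; it quotes it from Ringel--Zhang, so there is no in-paper argument to compare yours against. Your outline is a correct, self-contained reconstruction along standard lines: single-arrow criteria (Facts A and B), dimension shifting along $\Omega$ for (1), and the shift $\Tr\mho M\simeq\Omega\Tr M$ for (2). After that, the primed and double-primed items are bookkeeping with the five component types.

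The step you flagged as the main obstacle goes through cleanly. Let $P_1\xrightarrow{d}P_0\xrightarrow{p}M\to 0$ be a projective presentation and $0\to M\xrightarrow{f}Q\to\mho M\to 0$ the approximation sequence. Then $P_0\xrightarrow{fp}Q\to\mho M\to 0$ presents $\mho M$. After dualizing, the image of $Q^*\to P_0^*$ is $p^*(f^*(Q^*))=p^*(M^*)=\ker d^*$, because $f^*$ is onto. So $\Tr\mho M\cong P_0^*/\ker d^*\cong\Ima d^*$, which is the kernel of $P_1^*\to\Tr M$. Hence $\Ext^i_{\Lambda^{\op}}(\Tr\mho M,\Lambda)\cong\Ext^{i+1}_{\Lambda^{\op}}(\Tr M,\Lambda)$ for $i\ge 1$, as you wanted.

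There is one assertion you leave unproved, and you use it twice. In the converse of Fact A you need $\Omega M$ to be indecomposable and non-projective, since otherwise $[\Omega M]$ is not a vertex. In (1) you say this is preserved along $\Omega$ ``by \Cref{agemo}'', but \Cref{agemo} concerns $\mho$ of a torsionless module, not $\Omega$ of a module with $\Ext^1_\Lambda(M,\Lambda)=0$. It is easy to supply.

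\emph{Non-projective:} if $\Omega M$ were projective, then $\Ext^1_\Lambda(M,\Omega M)=0$ would split $0\to\Omega M\to P\to M\to 0$, making $M$ projective.

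\emph{Indecomposable:} suppose $\Omega M=A_1\oplus A_2$ with both summands nonzero. You have shown that $\iota\colon\Omega M\to P$ is a minimal left $\proj$-approximation, and minimal left approximations are additive. So $\iota\cong\iota_1\oplus\iota_2$ with $\iota_k\colon A_k\to Q_k$, and $M\cong\Coker\iota_1\oplus\Coker\iota_2$. Indecomposability forces, say, $\Coker\iota_2=0$. Then $Q_2\neq 0$ is a direct summand of $P$ contained in the kernel of $P\to M$, which is impossible for a projective cover.

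Separately, the displayed path in (1) has its arrows reversed. Since $M\cong\mho\Omega M$, the arrow goes from $[M]$ to $[\Omega M]$, so the path starting at $[M]$ is $[M]\to[\Omega M]\to\cdots\to[\Omega^tM]$ (drawn leftwards in the paper's convention). Your accompanying condition, $\Ext^1_\Lambda(\Omega^iM,\Lambda)=0$ for $0\le i<t$, is the right one, so nothing downstream breaks.
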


\section{A hierarchy of subcategories of modules}
 We begin with some notation. Choose a minimal projective resolution of $D(\Lambda_\Lambda)$
\[
\xymatrix@R=0.2cm{
\cdots\ar[r]& P_2\ar[rr]^-{\del_2}\ar@{->>}[dr] & & P_1\ar@{->>}[dr]\ar[rr]^-{\del_1}& & P_0\ar[r]^-\pi &D(\Lambda_\Lambda)=K_0\ar[r]&0,\\
&&K_2\ar@{^(->}[ur]&&K_1\ar@{^(->}[ur]&&&
}
\]
where $K_i$ is the image of $\del_i$ for $i\geq 1$.

\vskip5pt

Also, choose a minimal injective coresolution of $\leftindex_{\Lambda}{\Lambda}$
\[
\xymatrix@R=0.2cm{
0\ar[r] & \leftindex_{\Lambda}\Lambda=L_0\ar[r] & I_0\ar[rr]^-{\del_{-1}}\ar@{->>}[dr] & & I_{-1}\ar[rr]^-{\del_{-2}}\ar@{->>}[dr] & & I_{-2}\ar[r]^-{\del_{-3}}&\cdots, \\
& & & L_{-1}\ar@{^(->}[ur] & & L_{-2}\ar@{^(->}[ur] & &
}
\]
where $L_i$ is the image of $\del_i$ for $i\leq -1$.

\vskip5pt

In the rest of this paper, for $n\in \mathbb N$, we define
\[
\mathcal S_n\colonequals \{K_0, K_1, \cdots, K_n\},\qquad
\mathcal S=\mathcal S_\infty\colonequals\bigcup\limits_{n\in \mathbb N} \mathcal S_n
=\{K_0, K_1, \cdots\}, 
\]
and
\[
\mathcal T_n\colonequals\{L_0, L_{-1}, \cdots, L_{-n}\},\qquad
\mathcal T=\mathcal T_\infty\colonequals \bigcup\limits_{n\in \mathbb N} \mathcal T_n
= \{L_0, L_{-1}, \cdots\}.
\]
For $\mathcal X\subseteq \Lambda\mo$, let $\perpone{\mathcal X}$ denote the full subcategory
\[
\{Z\in\Lambda\mo\mid \Ext^1_\Lambda(Z,X)=0 \text{ for all }X\in\mathcal X\}.
\]

\begin{lemma}\label{extensionless}
Let $M$ be a non-projective indecomposable $\Lambda$-module and $t\in \mathbb Z^+\cup \{\infty\}$. Then there is a path of length $t$ starting at $[M]$ if and only if $M\in \perpone{\mathcal T_{t-1}}$. 
\end{lemma}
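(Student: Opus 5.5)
By \Cref{RZ}(1) and (1$'$), a path of length $t$ starts at $[M]$ if and only if $\Ext^i_\Lambda(M,\Lambda)=0$ for $1\le i\le t$ (for $t=\infty$: for all $i\ge 1$). So the lemma reduces to the homological identity
\[
\Ext^{i}_\Lambda(M,\Lambda)\cong \Ext^1_\Lambda(M,L_{-(i-1)})\qquad (i\ge 1),
\]
obtained by dimension shifting along the minimal injective coresolution of ${}_\Lambda\Lambda$. Granting it, $M\in\perpone{\mathcal T_{t-1}}$, i.e.\ $\Ext^1_\Lambda(M,L_{-j})=0$ for $0\le j\le t-1$, is exactly the vanishing of $\Ext^i_\Lambda(M,\Lambda)$ for $1\le i\le t$. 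For $t=\infty$, $\mathcal T_\infty$ is the union of all $\mathcal T_n$, so the condition is vanishing for all $i\ge1$, which is semi-Gorenstein-projectivity.

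To prove the identity, fix $j\ge 0$ and use the short exact sequences $0\to L_{-j}\to I_{-j}\to L_{-j-1}\to 0$ from the coresolution, with $L_0=\Lambda$ and $I_{-j}$ injective. Applying $\Hom_\Lambda(M,-)$ and using $\Ext^{\ge1}_\Lambda(M,I_{-j})=0$ gives
\[
\Ext^{k}_\Lambda(M,L_{-j-1})\cong\Ext^{k+1}_\Lambda(M,L_{-j})\qquad (k\ge1).
\]
By induction on $j$, $\Ext^1_\Lambda(M,L_{-j})\cong\Ext^{2}_\Lambda(M,L_{-j+1})\cong\cdots\cong\Ext^{j+1}_\Lambda(M,L_0)=\Ext^{j+1}_\Lambda(M,\Lambda)$. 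Setting $i=j+1$ gives the claim.

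There is no real obstacle here; the argument is routine. The only points needing care are the index bookkeeping (the module $L_{-j}$ corresponds to $\Ext^{j+1}$, which is why the statement uses $\mathcal T_{t-1}$) and the $t=\infty$ case, where we invoke (1$'$) instead of (1). Minimality of the coresolution plays no role; any injective coresolution works.
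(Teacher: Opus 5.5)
Your proposal is correct and follows the paper's own argument: the paper says the lemma "follows directly from the definitions and \Cref{RZ}(1)(1$'$)", and your dimension shift $\Ext^1_\Lambda(M,L_{-j})\cong\Ext^{j+1}_\Lambda(M,\Lambda)$ along $0\to L_{-j}\to I_{-j}\to L_{-j-1}\to 0$ is exactly the step left implicit there. Your indexing, with $L_{-j}$ corresponding to $\Ext^{j+1}$, and your treatment of $t=\infty$ via (1$'$) are both handled correctly.
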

\begin{proof}
    It follows directly from the definitions and \Cref{RZ}(1)(1'). 
\end{proof}

\begin{proposition}\label{torsionfree}
 Let $M$ be a $\Lambda$-module and $t\in \mathbb Z^+\cup \{\infty\}$. If $M\in \perpone{\mathcal S_t}$, then $M$ is $t$-torsionfree. 
\end{proposition}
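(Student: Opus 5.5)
We will reduce $t$-torsionfreeness to a statement about stable Hom into the syzygies $K_j$ of $D(\Lambda_\Lambda)$, and then use the hypothesis $\Ext^1_\Lambda(M,K_j)=0$ to kill those stable Homs. The case $t=\infty$ follows from the finite cases, since $\perpone{\mathcal S_\infty}\subseteq\perpone{\mathcal S_t}$ for every finite $t$. So fix $t\geq 1$ finite and $1\le i\le t$. Projective summands of $M$ contribute nothing: they are annihilated by $\Tr$, and they satisfy $\Ext^1_\Lambda(-,K_j)=0$ trivially. We may therefore assume $M$ has no projective summands.

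\textbf{Step 1: move $\Ext^i(\Tr M,\Lambda)$ to the left-module side.} Applying the duality $D$ gives
\[
\Ext^i_{\Lambda^{\op}}(\Tr M,\Lambda_\Lambda)\cong \Ext^i_\Lambda(D(\Lambda_\Lambda), D\Tr M)=\Ext^i_\Lambda(K_0,\tau M),
\]
where $\tau=D\Tr$. Next we dimension-shift along the minimal projective resolution of $K_0=D(\Lambda_\Lambda)$, using the short exact sequences $0\to K_{j+1}\to P_j\to K_j\to 0$ with $P_j$ projective. For $i\ge 1$ this yields
\[
\Ext^i_\Lambda(K_0,\tau M)\cong \Ext^1_\Lambda(K_{i-1},\tau M).
\]

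\textbf{Step 2: Auslander--Reiten formula.} The formula $\Ext^1_\Lambda(X,\tau M)\cong D\,\underline{\Hom}_\Lambda(M,X)$ gives
\[
\Ext^i_{\Lambda^{\op}}(\Tr M,\Lambda)\cong D\,\underline{\Hom}_\Lambda(M,K_{i-1}).
\]
Here $\underline{\Hom}$ denotes morphisms modulo those factoring through projectives. Thus it suffices to show that every map $M\to K_{i-1}$ factors through a projective module.

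\textbf{Step 3: use the hypothesis.} Apply $\Hom_\Lambda(M,-)$ to $0\to K_i\to P_{i-1}\to K_{i-1}\to 0$. This gives the exact sequence
\[
\Hom_\Lambda(M,P_{i-1})\to\Hom_\Lambda(M,K_{i-1})\to\Ext^1_\Lambda(M,K_i).
\]
Since $K_i\in\mathcal S_t$ for $i\le t$, the last term vanishes. Hence every map $M\to K_{i-1}$ lifts through $P_{i-1}$, so $\underline{\Hom}_\Lambda(M,K_{i-1})=0$ and $\Ext^i(\Tr M,\Lambda)=0$ for $1\le i\le t$. Only $K_1,\dots,K_t$ are used; the condition for $K_0$ is automatic because $K_0$ is injective.

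\textbf{Main obstacle.} The delicate part is Step 1 together with the bookkeeping in the Auslander--Reiten formula. One must check that the duality identifies $\Ext^i_{\Lambda^{\op}}(\Tr M,\Lambda)$ with $\Ext^i_\Lambda(D(\Lambda_\Lambda),\tau M)$ with the correct sidedness. One must also use the version of the formula with the projectively stable Hom $\underline{\Hom}(M,X)$, valid for $M$ without projective summands, rather than the injectively stable one. If this identification gave trouble, the fallback is to compute $\Ext^i(\Tr M,\Lambda)$ directly from a minimal projective presentation $P^1\to P^0\to M\to 0$ via $\Hom(-,\Lambda)$, and compare with $\Hom(M,-)$ applied to the resolution of $D(\Lambda_\Lambda)$.
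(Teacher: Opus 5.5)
Your proof is correct, and it takes a genuinely different route from the paper's.

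\textbf{The paper's route.} The paper inducts on $t$ inside the $\mho$-quiver. It first shows $M$ is torsionless by lifting an embedding $M\to D(\Lambda_\Lambda)^n$ through $\pi^n$. It then uses the minimal left $\proj$-approximation $M\to Q$ and a diagram chase to get $\mho M\in\perpone{\mathcal S_{t-1}}$. Finally it invokes the Ringel--Zhang characterization (\Cref{RZ}(2)): $M$ is $t$-torsionfree if and only if there is a path of length $t$ ending at $[M]$.

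\textbf{Your route.} You use the duality $D$, dimension shifting along the resolution of $D(\Lambda_\Lambda)$, and the Auslander--Reiten formula to obtain the identity $\Ext^i_{\Lambda^{\op}}(\Tr M,\Lambda)\cong D\,\underline{\Hom}_\Lambda(M,K_{i-1})$. The hypothesis then finishes the job in one line.

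\textbf{What your route gains.} First, it is self-contained and never uses \Cref{RZ}. Second, it needs no reduction to indecomposables. The formula $\Ext^1_\Lambda(X,\tau M)\cong D\,\underline{\Hom}_\Lambda(M,X)$ holds for every $M$, so even discarding projective summands is unnecessary; your caveat about it is harmless but not needed. Third, it is sharper: $\Ext^1_\Lambda(M,K_i)=0$ by itself forces $\Ext^i_{\Lambda^{\op}}(\Tr M,\Lambda)=0$, degree by degree.

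It also gives the converse almost for free. Your exact sequence shows that $\underline{\Hom}_\Lambda(M,K_{i-1})=0$ if and only if the map $\Ext^1_\Lambda(M,K_i)\to\Ext^1_\Lambda(M,P_{i-1})$ induced by $K_i\hookrightarrow P_{i-1}$ is injective. So when $\Ext^1_\Lambda(M,\Lambda)=0$ you get $\Ext^i_{\Lambda^{\op}}(\Tr M,\Lambda)=0\iff\Ext^1_\Lambda(M,K_i)=0$. This recovers \Cref{torsionfreeinverse} degree-wise, without the $\Omega^t\mho^t M\cong M$ argument.

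\textbf{What the paper's route gains.} It stays within the $\mho$-quiver language that drives \Cref{quiverthm}. Along the way it produces the useful intermediate fact $M\in\perpone{\mathcal S_t}\Rightarrow \mho M\in\perpone{\mathcal S_{t-1}}$.
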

\begin{proof}
It suffices to consider the case $t< \infty$ and $M$ is indecomposable non-projective. We give a proof by induction on $t$. We first show that $M$ is torsionless if $M\in \perpone{\mathcal S_1}$. Choose an embedding $h\colon M \rightarrow D(\Lambda_\Lambda)^n$. Since $\Ext_\Lambda^1(M, K_1^n)=0$, $h$ factors through $\pi^n\colon P_0^n\rightarrow D(\Lambda_\Lambda)^n$, say with $h=\pi^n g$, where $g\colon M\rightarrow P_0^n$. $g$ is a monomorphism since $h$ is a monomorphism. Thus, $M$ is torsionless. 
\[
\xymatrix{
&&&M\ar[d]^-{h}\ar@{..>}[ld]_-{g}&\\
0\ar[r]& K_1^n\ar[r]& P_0^n \ar[r]^-{\pi^n} & D(\Lambda_\Lambda)^n\ar[r]& 0
}
\]
Now suppose that $M\in \perpone{\mathcal S_t}$, $t\geq 2$. By \Cref{RZ}(2) and induction, it suffices to prove that $\mho M\in \perpone{\mathcal S_{t-1}}$. For $1\leq i\leq t-1$, the two exact sequences $ 0\longrightarrow K_{i+1}\longrightarrow P_{i}\longrightarrow K_{i}\longrightarrow 0$ and $0\longrightarrow M\longrightarrow Q\longrightarrow \mho M\longrightarrow 0$ yield a commutative diagram
 \[
    \xymatrix{
    & 0\ar[d] & 0 \ar[d]& 0\ar[d]& \\
    0\ar[r] & \Hom_{\Lambda}(\mho M, K_{i+1})\ar[r]\ar[d]& \Hom_{\Lambda}(\mho M, P_i)\ar[r]\ar[d]& \Hom_{\Lambda}(\mho M, K_i)\ar[d]& \\
    0\ar[r] & \Hom_{\Lambda}(Q, K_{i+1})\ar[r]\ar[d]& \Hom_{\Lambda}(Q, P_i)\ar[r]\ar[d]& \Hom_{\Lambda}(Q, K_i)\ar[d]\ar[r]&0 \\
    0\ar[r] & \Hom_{\Lambda}(M,K_{i+1})\ar[r]& \Hom_{\Lambda}(M, P_i)\ar[r]\ar[d]& \Hom_{\Lambda}(M, K_i)\ar[r]\ar[d]& 0\\
    &&0&\Ext_\Lambda^1(\mho M, K_i)\ar[d]&\\
    &&&0&
    }
    \]
    which shows that $\Ext_\Lambda^1(\mho M, K_i)=0$. 
\end{proof}

\Cref{torsionfree} has a partial converse. 

\begin{proposition}\label{torsionfreeinverse}
   Let $M$ be a $\Lambda$-module and $t\in \mathbb Z^+\cup \{\infty\}$. If $\Ext^1_\Lambda(M,\Lambda)=0$ and $M$ is $t$-torsionfree, then $M\in \perpone{\mathcal S_t}$. 
\end{proposition}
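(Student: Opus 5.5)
We reduce to the case $t<\infty$ and $M$ indecomposable non-projective; projective summands are harmless since $\Ext^1_\Lambda(P,-)=0$, and the case $t=\infty$ follows from the finite cases. We then argue by induction on $t$, running the proof of \Cref{torsionfree} backwards.

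The key is a dimension-shift formula. Let $M$ be torsionless with $\Ext^1_\Lambda(M,\Lambda)=0$, and take the exact sequence $0\to M\xrightarrow{f} Q\to \mho M\to 0$ of \Cref{agemo}, with $f$ a minimal left $\proj$-approximation. For every $i\geq 0$ we aim to show
\[
\Ext^1_\Lambda(M,K_{i+1})\cong \Ext^1_\Lambda(\mho M,K_i).
\]
Apply $\Hom_\Lambda(-,K_i)$, $\Hom_\Lambda(-,P_i)$ and $\Hom_\Lambda(-,K_{i+1})$ to this sequence and $\Hom_\Lambda(M,-)$, $\Hom_\Lambda(Q,-)$ to $0\to K_{i+1}\to P_i\to K_i\to 0$; this gives the same $3\times 3$ diagram as in the proof of \Cref{torsionfree}. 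Since $f$ is a left $\proj$-approximation, $\Hom_\Lambda(Q,P_i)\to\Hom_\Lambda(M,P_i)$ is surjective. Since $\Ext^1_\Lambda(M,\Lambda)=0$, the row $0\to\Hom(M,K_{i+1})\to\Hom(M,P_i)\to\Hom(M,K_i)\to\Ext^1(M,K_{i+1})\to 0$ ends with $\Ext^1(M,K_{i+1})$. The row for $Q$ is exact on the right. The column for $K_i$ ends with $\Hom(M,K_i)\to\Ext^1(\mho M,K_i)\to 0$, because $\Ext^1(Q,K_i)=0$.

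A diagram chase, or equivalently the snake lemma applied to the map between the rows for $Q$ and $M$, then identifies both $\Ext^1(M,K_{i+1})$ and $\Ext^1(\mho M,K_i)$ with the cokernel of $\Hom(Q,K_i)\oplus\Hom(M,P_i)\to\Hom(M,K_i)$. Concretely, both equal $\Hom(M,K_i)$ modulo the maps that factor through $f$ plus the maps that factor through $P_i\to K_i$. The maps factoring through $P_i$ already factor through $f$, by the approximation property, so both groups are in fact $\Hom(M,K_i)$ modulo the maps that factor through $f$. Making this identification precise is the main technical step.

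For $t=1$: $M$ is torsionless, so it embeds through $f$ into $Q$. Then $\Ext^1(M,K_1)\cong\Ext^1(\mho M,K_0)=\Ext^1(\mho M,D(\Lambda_\Lambda))=0$, since $D(\Lambda_\Lambda)$ is injective. Together with the hypothesis $\Ext^1(M,K_0)=0$, which holds trivially by injectivity, this gives $M\in\perpone{\mathcal S_1}$.

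For $t\geq 2$: by \Cref{RZ}(2), $M$ is torsionless, and $\mho M$ is indecomposable non-projective and $(t-1)$-torsionfree, since the path of length $t$ ending at $[M]$ passes through $[\mho M]$. We also need $\Ext^1(\mho M,\Lambda)=0$, which holds by \Cref{RZ}(1) because $[\mho M]$ is the start of the arrow to $[M]$. The induction hypothesis then gives $\mho M\in\perpone{\mathcal S_{t-1}}$. The formula above yields $\Ext^1(M,K_{i+1})=0$ for $0\le i\le t-1$, and $\Ext^1(M,K_0)=0$ by injectivity, so $M\in\perpone{\mathcal S_t}$.

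The main obstacle is the diagram chase in the key formula, in particular checking that the hypothesis $\Ext^1(M,\Lambda)=0$ is used exactly where needed: it makes $\Hom(M,-)$ act as expected on the $P_i$ and controls the cokernel in the row for $M$.
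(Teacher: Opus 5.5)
Your proof is correct and is essentially the paper's argument. The paper shifts all the way to $\mho^tM$ in one chain, $\Ext^1_\Lambda(M,K_t)\cong\Ext^{t+1}_\Lambda(\mho^tM,K_t)\cong\Ext^{t}_\Lambda(\mho^tM,K_{t-1})\cong\cdots\cong\Ext^1_\Lambda(\mho^tM,K_0)=0$, whereas you perform the same shifts one $\mho$-step at a time by induction, mirroring the proof of \Cref{torsionfree}.

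Your ``main technical step'' is just two dimension shifts, $\Ext^1_\Lambda(M,K_{i+1})\cong\Ext^2_\Lambda(\mho M,K_{i+1})\cong\Ext^1_\Lambda(\mho M,K_i)$. These hold because $\Ext^1_\Lambda(\mho M,\Lambda)=0$ by the approximation property and $\Ext^2_\Lambda(\mho M,\Lambda)\cong\Ext^1_\Lambda(M,\Lambda)=0$. In your Hom-level version, equating the two subgroups of $\Hom_\Lambda(M,K_i)$ also needs the easy reverse containment: maps factoring through $f$ lift through $P_i\to K_i$ because $Q$ is projective.
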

\begin{proof}
 It suffices to consider the case $t< \infty$ and $M$ is indecomposable non-projective. Also, one only needs to prove that $\Ext_\Lambda^1(M, K_t)=0$ since a $t$-torsionfree module is also $(t-1)$-torsionfree. By \Cref{agemo} and \Cref{RZ}(2), $M\cong \Omega^t\mho^t M$. Thus, by dimension shifting, $\Ext_\Lambda^1(M, K_t)=\Ext_\Lambda^{t+1}(\mho^t M, K_t)$. 
 
 \vskip5pt
 
 By \Cref{RZ}(1)(2), $\Ext_\Lambda^{i}(\mho^t M, \Lambda)=0$ for $1\leq i\leq t$. And $\Ext_\Lambda^{t+1}(\mho^t M, \Lambda)=\Ext_\Lambda^{1}(M, \Lambda)=0$. Then, apply $\Hom_{\Lambda}(\mho^t M,-)$ to the short exact sequences $\{ 0\longrightarrow K_{i+1}\longrightarrow P_{i}\longrightarrow K_{i}\longrightarrow 0\}$. They yield that $\Ext_\Lambda^{t+1}(\mho^t M, K_t)=\Ext_\Lambda^{t}(\mho^t M, K_{t-1})=\cdots = \Ext_\Lambda^1(\mho^t M, K_0)=\Ext_\Lambda^1(\mho^t M, D(\Lambda_\Lambda))=0$. This completes the proof. 
\end{proof}

\begin{theorem}\label{quiverthm}
    Let $M$ be an indecomposable non-projective $\Lambda$-module. Suppose that $m, n\in \mathbb N\cup \{\infty\}$ and $m\ne 0$. Then there is both a path of length $m$ starting at $[M]$ and a path of length $n$ ending at $[M]$ if and only
if
\[
  M\in\perpone{(\mathcal T_{m-1}\cup\mathcal S_n)}.
\]
In particular, there is both an arrow starting at $[M]$ and an arrow ending at
$[M]$ if and only if
\[
  M\in\perpone{(\Lambda\oplus K_1)}.
\]
\end{theorem}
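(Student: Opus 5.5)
The plan is to split the condition $M\in\perpone{(\mathcal T_{m-1}\cup\mathcal S_n)}$ into $M\in\perpone{\mathcal T_{m-1}}$ and $M\in\perpone{\mathcal S_n}$. These two halves will then be matched with the two path conditions, using \Cref{extensionless}, \Cref{torsionfree}, \Cref{torsionfreeinverse} and \Cref{RZ}(2),(2$'$). When $n=0$, a path of length $0$ ending at $[M]$ always exists and $\mathcal S_0=\{D(\Lambda_\Lambda)\}$ is injective, so $\Ext^1_\Lambda(M,K_0)=0$ holds automatically. In that case the statement is exactly \Cref{extensionless}, and we may assume $n\geq 1$.

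For the forward direction, suppose there is a path of length $m\geq 1$ starting at $[M]$ and a path of length $n$ ending at $[M]$. By \Cref{extensionless}, $M\in\perpone{\mathcal T_{m-1}}$. In particular $\Ext^1_\Lambda(M,\Lambda)=0$, since $L_0=\Lambda\in\mathcal T_{m-1}$ because $m\geq 1$. By \Cref{RZ}(2) (or (2$'$) when $n=\infty$), $M$ is $n$-torsionfree. \Cref{torsionfreeinverse} then gives $M\in\perpone{\mathcal S_n}$. Combining the two conclusions yields $M\in\perpone{(\mathcal T_{m-1}\cup\mathcal S_n)}$.

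For the converse, assume $M\in\perpone{(\mathcal T_{m-1}\cup\mathcal S_n)}$. From $M\in\perpone{\mathcal T_{m-1}}$, \Cref{extensionless} gives a path of length $m$ starting at $[M]$. From $M\in\perpone{\mathcal S_n}$, \Cref{torsionfree} shows that $M$ is $n$-torsionfree. \Cref{RZ}(2),(2$'$) then gives a path of length $n$ ending at $[M]$.

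For the particular statement, take $m=n=1$. Then $\mathcal T_0=\{\Lambda\}$ and $\mathcal S_1=\{D(\Lambda_\Lambda),K_1\}$. Since $D(\Lambda_\Lambda)$ is injective, $\Ext^1_\Lambda(M,D(\Lambda_\Lambda))=0$ always, so the condition reduces to $M\in\perpone{(\Lambda\oplus K_1)}$, using additivity of $\Ext^1$ in the second variable.

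There is no real obstacle, since the preceding propositions were designed to feed into this argument. The only point needing care is that \Cref{torsionfreeinverse} requires $\Ext^1_\Lambda(M,\Lambda)=0$, and this is where the hypothesis $m\neq 0$ is used. The degenerate case $n=0$ and the case of infinite lengths also have to be handled, as done above.
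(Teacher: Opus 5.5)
Your proposal is correct and follows the paper's own proof essentially verbatim. You reduce $n=0$ to \Cref{extensionless} using injectivity of $D(\Lambda_\Lambda)$, split $\perpone{(\mathcal T_{m-1}\cup\mathcal S_n)}$ as an intersection, and use \Cref{extensionless}, \Cref{RZ}(2)(2$'$) and \Cref{torsionfreeinverse} for one direction and \Cref{torsionfree} for the other; you also make explicit what the paper leaves implicit, namely that $m\neq 0$ supplies $\Ext^1_\Lambda(M,\Lambda)=0$ and that the case $m=n=1$ reduces to $\perpone{(\Lambda\oplus K_1)}$ because $D(\Lambda_\Lambda)$ is injective.
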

\begin{proof}
    The case $n=0$ follows from \Cref{extensionless}, since $\mathcal S_0=\{D(\Lambda_\Lambda)\}$. Now suppose that $n\geq 1$. Note that $\perpone{(\mathcal T_{m-1}\cup\mathcal S_n)}=\perpone{\mathcal T_{m-1}}\cap \perpone{\mathcal S_n}$. If there is both a path of length $m$ starting at $[M]$ and a path of length $n$ ending at $[M]$, then by \Cref{extensionless}, \Cref{RZ}(2)(2') and \Cref{torsionfreeinverse}, we have $ M\in\perpone{\mathcal T_{m-1}}\cap \perpone{\mathcal S_n}$. Conversely, if $M \in\perpone{\mathcal T_{m-1}}\cap \perpone{\mathcal S_n}$, then both paths exist by \Cref{extensionless}, \Cref{RZ}(2)(2') and \Cref{torsionfree}. 
\end{proof}

We draw a diagram to illustrate \Cref{quiverthm}. Let $C_{m,n}$ be the class of indecomposable non-projective $\Lambda$-modules $M$ such that there is both a path of length $m$ starting at $[M]$ and a path of length $n$ ending at $[M]$, and $\mathfrak C_{m,n}\coloneqq \add(C_{m,n}\cup \proj)$ be the additive subcategory generated by $C_{m,n}$ and $\proj$. Then the classes $\left\{\mathfrak C_{m,n}\mid m,n\in \mathbb N\cup\{\infty\}\right\}$ form a lattice:

\vskip5pt

\begin{tikzpicture}[scale=.95]

\tikzset{
  every node/.style={
    inner sep=1.2pt
  },
  inclusion/.style={
    font=\small,
    inner sep=0pt,
    fill=white,
    sloped
  },
  omitted/.style={
    font=\small,
    inner sep=0pt
  }
}

\newcommand{\inclR}[2]{%
  \path (#1) -- node[midway,inclusion] {$\subseteq$} (#2);
}

\newcommand{\inclL}[2]{%
  \path (#1) -- node[midway,inclusion] {$\supseteq$} (#2);
}

\node (c00) at (0,8.0)
  {$\mathfrak C_{0,0}=\Lambda\mo$};

\node (c10) at (-1.9,6.8)
  {$\mathfrak C_{1,0}$};

\node[text=red] (c01) at (1.9,6.8)
  {$\mathfrak C_{0,1}$};

\node (c11) at (0,5.6)
  {$\mathfrak C_{1,1}$};

\node[rotate=35] (dots30) at (-3.8,5.6)
  {$\cdots$};

\node[text=red][rotate=-35] (dots03) at (3.8,5.6)
  {$\cdots$};

\node (cinf0) at (-5.7,4.4)
  {$\mathfrak C_{\infty,0}$};

\node[text=red] (c0inf) at (5.7,4.4)
  {$\mathfrak C_{0,\infty}$};

\node[rotate=35] (dots31) at (-2.4,4.4)
  {$\cdots$};

\node[rotate=-35] (dots13) at (2.4,4.4)
  {$\cdots$};

\node (cinf1) at (-3.8,3.2)
  {$\mathfrak C_{\infty,1}$};

\node (c1inf) at (3.8,3.2)
  {$\mathfrak C_{1,\infty}$};

\node (cinf2) at (-2.2,2.0)
  {$\mathfrak C_{\infty,2}$};

\node (c2inf) at (2.2,2.0)
  {$\mathfrak C_{2,\infty}$};

\node [rotate=-35](vdotL) at (-1.2,1.0)
  {$\cdots$};

\node[rotate=35] (vdotR) at (1.2,1.0)
  {$\cdots$};

\node (cinfinf) at (0,0)
  {$\mathfrak C_{\infty,\infty}$};

\node[omitted]            at ( 0.00,4.20) {$\vdots$};
\node[omitted,rotate=30]  at (-1.70,3.35) {$\cdots$};
\node[omitted]            at ( 0.00,3.20) {$\vdots$};
\node[omitted,rotate=-30] at ( 1.70,3.35) {$\cdots$};
\node[omitted,rotate=30]  at (-0.80,2.05) {$\cdots$};
\node[omitted]            at ( 0.00,1.75) {$\vdots$};
\node[omitted,rotate=-30] at ( 0.80,2.05) {$\cdots$};
\inclR{c10}{c00}
\inclL{c01}{c00}
\inclL{c11}{c10}
\inclR{c11}{c01}
\inclR{dots30}{c10}
\inclR{cinf0}{dots30}
\inclR{dots31}{c11}
\inclR{cinf1}{dots31}
\inclL{dots03}{c01}
\inclL{c0inf}{dots03}
\inclL{dots13}{c11}
\inclL{c1inf}{dots13}
\inclL{cinf1}{cinf0}
\inclL{cinf2}{cinf1}
\inclL{vdotL}{cinf2}
\inclL{cinfinf}{vdotL}
\inclR{c1inf}{c0inf}
\inclR{c2inf}{c1inf}
\inclR{vdotR}{c2inf}
\inclR{cinfinf}{vdotR}

\end{tikzpicture}

\vskip5pt

where all $\mathfrak C_{m,n}$ are the left-$\Ext^1$-orthogonal classes $\perpone{(\mathcal T_{m-1}\cup \mathcal S_{n})}$, except the red ones (define $\mathcal T_{-1}\coloneqq \varnothing $, then $\mathfrak C_{0,0}=\Lambda \mo=\perpone{\mathcal S_0}$; but in general one only has $\mathfrak C_{0,t}\supseteq \perpone{\mathcal S_t}$ for $t\in \mathbb{Z}^+\cup \{\infty\}$, as shown in \Cref{torsionfree}). 

\vskip5pt

In particular, there is an immediate corollary, which was proved directly in \cite{DLL}. 

\begin{corollary}
   Denote by $\Gproj(\Lambda)$ the class of finitely generated Gorenstein-projective $\Lambda$-modules. Then we have
$$
\Gproj(\Lambda)=\perpone{(\mathcal T\cup \mathcal S)}. 
$$
\end{corollary}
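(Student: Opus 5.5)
The plan is to deduce the corollary from \Cref{quiverthm} with $m=n=\infty$, combined with \Cref{RZ}(3), and then to deal separately with projective and decomposable modules, since \Cref{quiverthm} only concerns indecomposable non-projective modules.

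First I will check that both classes in the statement are closed under finite direct sums and direct summands. For $\Gproj(\Lambda)$ this is standard: $\Ext$ and $\Tr$ commute with finite direct sums up to projective summands, and the defining conditions are vanishing of $\Ext$ groups. For $\perpone{(\mathcal T\cup\mathcal S)}$ it is immediate from additivity of $\Ext^1_\Lambda(-,X)$. Both classes also contain every projective module. A projective module is Gorenstein-projective because $\Ext^i(P,\Lambda)=0$ for $i\ge1$, and $\Tr P$ is projective (indeed zero up to projective summands), so $\Ext^i_{\Lambda^{\op}}(\Tr P,\Lambda)=0$. Projectives also satisfy $\Ext^1_\Lambda(P,-)=0$. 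By Krull--Schmidt, it then suffices to compare the indecomposable non-projective members of the two classes.

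Let $M$ be indecomposable and non-projective. By \Cref{RZ}(3), $M$ is Gorenstein-projective if and only if there is both a path of length $\infty$ starting at $[M]$ and a path of length $\infty$ ending at $[M]$. By \Cref{quiverthm} with $m=n=\infty$, this holds if and only if $M\in\perpone{(\mathcal T_\infty\cup\mathcal S_\infty)}=\perpone{(\mathcal T\cup\mathcal S)}$. Here I should confirm that \Cref{quiverthm} genuinely covers the case $m=\infty$, where $\mathcal T_{m-1}$ is read as $\mathcal T_\infty$. This is consistent with \Cref{extensionless}, which allows $t=\infty$ and uses $\mathcal T_{\infty-1}=\mathcal T$. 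Likewise, the case $n=\infty$ enters through \Cref{torsionfree} and \Cref{torsionfreeinverse}, which are stated for $t=\infty$.

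The only mildly delicate point is this convention $\infty-1=\infty$, together with the direct-sum reduction. Neither is a real obstacle: the corollary is essentially the $(\infty,\infty)$ corner of the lattice $\mathfrak C_{\infty,\infty}$ drawn above, and one identifies $\mathfrak C_{\infty,\infty}=\add(C_{\infty,\infty}\cup\proj)$ with $\Gproj(\Lambda)$ via \Cref{RZ}(3).
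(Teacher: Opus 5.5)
Your proposal is correct and follows the paper's proof: you identify $\Gproj(\Lambda)$ with $\mathfrak C_{\infty,\infty}$ via \Cref{RZ}(3) and then apply \Cref{quiverthm} with $m=n=\infty$. Your Krull--Schmidt reduction, checking that both classes contain $\proj$ and are closed under finite direct sums and summands, spells out the additive-closure step that the paper leaves implicit.
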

\begin{proof}
By \Cref{RZ}(3), $\Gproj(\Lambda)=\mathfrak C_{\infty,\infty}$, which is equal to $\perpone{(\mathcal T\cup \mathcal S)}$ by \Cref{quiverthm}. 
\end{proof}

The (projective) stable category of $\Lambda$ is defined as the additive quotient category $\Lambda\mbox{-}\underline{\rm mod}\coloneqq \Lambda\mo/\proj$; see \cite[IV.1]{ARS} or \cite[2.2]{K}. Following \cite[(1.1)]{RZ2}, a $\Lambda$-module $M$ satisfying $\Ext_\Lambda^1(M, \Lambda)=0$ is called extensionless. Let $\mathcal L$ be the full subcategory of torsionless $\Lambda$-modules, $\mathcal Z$ be the full subcategory of extensionless $\Lambda$-modules. By \cite[4.7(3)]{RZ}, $\mho$ and $\Omega$ induce stable equivalences: 
\[
\mathcal L/ \proj
\quad
\mathrel{
\begin{gathered}
\xrightarrow{\hspace{1cm}\mho\hspace{1cm}}
\\[-2ex]
\xleftarrow[\hspace{1cm}\Omega\hspace{1cm}]{}
\end{gathered}
}
\quad
\mathcal Z /\proj
\]

Combining this result with \Cref{extensionless}, \Cref{RZ}(1)(2) and \Cref{quiverthm}, we have an analogue for reflexive modules. 

\begin{proposition}\label{reflexiveequivalence}
Let $\mathcal R$ be the full subcategory of reflexive $\Lambda$-modules, $\mathcal Y\coloneqq\perpone{(\Lambda\oplus K_1)}$, and $\mathcal W\coloneqq\perpone{(\Lambda\oplus L_{-1})}$. Then, $\mho$ and $\Omega$ induce stable equivalences:  
\[
\mathcal R/ \proj
\quad
\mathrel{
\begin{gathered}
\xrightarrow{\hspace{1cm}\mho\hspace{1cm}}
\\[-2ex]
\xleftarrow[\hspace{1cm}\Omega\hspace{1cm}]{}
\end{gathered}
}
\quad
\mathcal Y/\proj 
\quad
\mathrel{
\begin{gathered}
\xrightarrow{\hspace{1cm}\mho\hspace{1cm}}
\\[-2ex]
\xleftarrow[\hspace{1cm}\Omega\hspace{1cm}]{}
\end{gathered}
}
\quad
\mathcal W/\proj 
\]
\end{proposition}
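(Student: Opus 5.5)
Our plan is to restrict the stable equivalence $\mho\colon \mathcal L/\proj \rightleftarrows \mathcal Z/\proj\colon \Omega$ of \cite[4.7(3)]{RZ} to the full subcategories in question, first for $\mathcal R\subseteq\mathcal L$, then for $\mathcal Y\subseteq \mathcal L\cap\mathcal Z$. Since $\mathcal R/\proj$, $\mathcal Y/\proj$ and $\mathcal W/\proj$ are full subcategories of the respective stable categories, closed under isomorphism there and containing the zero object, it will suffice to verify on indecomposable non-projective objects that $\mho$ and $\Omega$ map each class into the next. All classes involved are closed under finite direct sums and summands, and $\mho$, $\Omega$ commute with finite direct sums up to projective summands. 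Quasi-inverseness is then inherited from the equivalence $\mathcal L/\proj\simeq\mathcal Z/\proj$, as $\mathcal R$, $\mathcal Y$, $\mathcal W$ will turn out to lie in $\mathcal L$ or $\mathcal Z$ appropriately.

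The first step is to describe the three classes in terms of the $\mho$-quiver, using \Cref{quiverthm} and \Cref{extensionless}. For an indecomposable non-projective $M$:
\begin{itemize}
\item by \Cref{RZ}(2), $M\in\mathcal R$ iff there is a path of length $2$ ending at $[M]$;
\item by \Cref{quiverthm} with $m=n=1$, $M\in\mathcal Y$ iff there is an arrow starting at $[M]$ and an arrow ending at $[M]$;
\item by \Cref{extensionless} with $t=2$ (since $\mathcal T_1=\{\Lambda,L_{-1}\}$), $M\in\mathcal W$ iff there is a path of length $2$ starting at $[M]$.
\end{itemize}
In particular $\mathcal R\subseteq\mathcal L$ and $\mathcal W\subseteq\mathcal Z$, while $\mathcal Y\subseteq\mathcal L\cap\mathcal Z$.

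The second step is the transport along arrows. Let $M$ be indecomposable non-projective with a path $[M]\dashleftarrow[\mho M]\dashleftarrow[\mho^2M]$ ending at $[M]$. Then $\mho M$ is indecomposable non-projective by \Cref{agemo}, has the arrow to $[M]$ starting at it, and has an arrow ending at it. So $\mho M\in\mathcal Y$. Conversely, let $N\in\mathcal Y$ be indecomposable non-projective, with arrows $[N']\dashleftarrow[N]\dashleftarrow[\mho N]$. The arrow starting at $[N]$ means $N=\mho N'$ with $N'$ torsionless, so $\Omega N\cong \Omega\mho N'\cong N'$ by \Cref{agemo}. Then $[\Omega N]$ is the end of a path of length $2$, so $\Omega N\in\mathcal R$. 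The same argument shifted by one vertex gives $\mho(\mathcal Y)\subseteq\mathcal W$ and $\Omega(\mathcal W)\subseteq\mathcal Y$. For the latter, given a path $[N'']\dashleftarrow[N']\dashleftarrow[N]$ starting at $[N]$, we have $N=\mho N'$, hence $\Omega N\cong N'$, which has an arrow in and an arrow out.

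The main obstacle I expect is making sure that the equivalences of \cite[4.7(3)]{RZ} really restrict to functors on these subcategories with the claimed quasi-inverses, particularly for the middle category $\mathcal Y$. There we use $\mathcal Y\subseteq\mathcal L$ for the functor $\mho$ to $\mathcal W$, but $\mathcal Y\subseteq\mathcal Z$ for the functor $\Omega$ to $\mathcal R$. We must check that both restrictions of the single pair $(\mho,\Omega)$ are compatible on $\mathcal Y/\proj$; this holds because the pair is an equivalence on all of $\mathcal L/\proj$ and $\mathcal Z/\proj$. So on each side the unit and counit isomorphisms restrict. I would also check carefully that decomposable objects cause no trouble: projective summands vanish stably, and $\mho$, $\Omega$ are additive up to projectives because minimal approximations and projective covers are additive.
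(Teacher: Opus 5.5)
Your proposal is correct and is essentially the paper's argument. The paper also restricts the Ringel--Zhang equivalence $\mathcal L/\proj\simeq\mathcal Z/\proj$, noting $\mathcal Y=\mathcal L\cap\mathcal Z$ and that stably $\Omega(\mathcal Y)=\mathcal R$, $\mho(\mathcal Y)=\mathcal W$ and $\mho(\mathcal R)=\mathcal Y=\Omega(\mathcal W)$; these are exactly the inclusions you verify via the path descriptions from \Cref{RZ}, \Cref{extensionless} and \Cref{quiverthm}, and your closing worry about compatibility on $\mathcal Y/\proj$ is unnecessary, since the two equivalences are independent restrictions.
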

\begin{proof}
Note that $\mathcal Y=\mathcal L\cap \mathcal Z$. Moreover, in the stable category $\Lambda\mbox{-}\underline{\rm mod}$, $\mathcal R=\Ima (\Omega\mid_\mathcal Y)$, $\mathcal W=\Ima (\mho\mid_\mathcal Y)$, and $\mathcal Y= \Ima (\mho\mid_\mathcal R)=\Ima (\Omega\mid_\mathcal W)$. 
\end{proof}

\end{document}